\newtheorem{thm}{Theorem}[section]
\newtheorem{prop}[thm]{Proposition}
\newtheorem{lem}[thm]{Lemma}
\newtheorem{remark}[thm]{Remark}
\newtheorem{corl}[thm]{Corollary}
\newcommand{\eps}{\varepsilon}
\def \<{\langle}
\def \>{\rangle}
\def \H{{\cal H}}
\def \H^0{{\cal H}^0 or}
\def \p{\partial}
\def \n{\nabla}
\def \beq{\begin{equation}}
\def \eeq{\end{equation}}
\def \n{\nabla}
\begin{document}



\title[Cheeger's isoperimetric constant]{A note on Cheeger's isoperimetric constant}

\author{Nelia Charalambous}
\address{Department of Mathematics and Statistics, University of Cyprus, Nicosia, 1678, Cyprus} \email[Nelia Charalambous]{nelia@ucy.ac.cy}

\author{Zhiqin Lu} \address{Department of
Mathematics, University of California,
Irvine, Irvine, CA 92697, USA} \email[Zhiqin Lu]{zlu@uci.edu}

\thanks{The first author was partially supported by a University of Cyprus Internal grant. The second author is partially supported by the DMS-19-08513.}
 \date{\today}

\thanks{ 2020 {\it Mathematics Subject Classification}: Primary: 53C21; Secondary: 58J60 }


\keywords{Mean Curvature, Cheeger's constant, Isoperimetric inequality, Riccati equation}

\begin{abstract}
In this short exposition we provide a simplified proof of Buser's result for Cheeger's isoperimetric constant. We also provide a comprehensive approach on how to obtain volume estimates for smooth hypersurfaces.
\end{abstract}
\maketitle

{\centering\footnotesize \it This article is dedicated to Peter Li on the occasion of his 70th birthday.\par}

\section{Introduction}

Let $(M^n,g)$ be a  compact   Riemannian manifold without boundary. Then Cheeger's constant is defined as
\[
h(M)=\inf \frac{\text{Vol}_{n-1}(\Sigma)}{\min\{\text{Vol}_n(A),\text{Vol}_n(B)\} },
\]
where  $\Sigma$ is a hypersurface which divides $M$ into two disjoint open sets $A, B$ such that $\bar{A}\cup\bar{B}=M$ and $\p A=\p B=\Sigma$.
It is well-known that
\[
\lambda_1(M)\geq \frac 14 h(M)^2,
\]
where $\lambda_1(M)$ is the first nonzero eigenvalue of the Laplacian on functions over $M$. Conversely, in \cite{Bus} Buser proved the following result.

\begin{thm}[Buser \cite{Bus}] \label{thm1} Suppose that $M$ is a smooth compact manifold with Ricci curvature bounded below, $\text{Ric} \geq -(n-1)K$  for some nonnegative constant $K$. Then
\[
\lambda_1(M)\leq C(n) \, \left( \sqrt{K} \;h(M) + h^2(M)\right),
\]
where   $C(n)$ is a constant that depends only on the dimension of the manifold.\end{thm}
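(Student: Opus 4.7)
The plan is to bound $\lambda_1(M)$ from above by the Rayleigh quotient of a mean-zero Lipschitz test function concentrated in a thin tube around an almost-Cheeger-minimizing hypersurface, so that the whole estimate reduces to a tube-volume bound that absorbs the Ricci lower bound. Concretely, fix $\eta > 0$ and pick a smooth dividing hypersurface $\Sigma$ with $M = \bar A \cup \bar B$, $\text{Vol}_n(A) \leq \text{Vol}_n(B)$, and $\text{Vol}_{n-1}(\Sigma) \leq (h(M)+\eta)\,\text{Vol}_n(A)$. Write $r(x) = d(x,\Sigma)$ and $U_\epsilon = \{r < \epsilon\}$ for a tube width $\epsilon > 0$ to be optimized later.

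\textbf{Test function and Rayleigh bound.} Let $f\colon M \to \mathbb{R}$ be the Lipschitz function equal to $+1$ on $A \setminus U_\epsilon$, to $-\gamma$ on $B\setminus U_\epsilon$, and affine in $r$ across $U_\epsilon$, with $\gamma > 0$ chosen so that $\int_M f\,dv = 0$. Since $\text{Vol}_n(A) \leq \text{Vol}_n(B)$ one has $\gamma \leq 1$, and provided the tube occupies less than, say, half of $M$,
\[
\int_M |\nabla f|^2\,dv \leq \frac{4}{\epsilon^2}\,\text{Vol}_n(U_\epsilon), \qquad \int_M f^2\,dv \geq c_0\,\text{Vol}_n(A).
\]
The variational characterization of $\lambda_1$ then yields
\[
\lambda_1(M) \leq \frac{C(n)}{\epsilon^2}\cdot\frac{\text{Vol}_n(U_\epsilon)}{\text{Vol}_n(A)}.
\]

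\textbf{Tube estimate.} The technical core is a Buser-type tube-volume bound
\[
\text{Vol}_n(U_\epsilon) \leq C_1(n)\,\text{Vol}_{n-1}(\Sigma)\,\epsilon\,\bigl(1 + \sqrt{K}\,\epsilon\bigr),
\]
valid for $\epsilon$ up to some multiple of $1/\sqrt{K}$. One foliates $U_\epsilon$ by the equidistant surfaces $\Sigma_s = \{r = s\}$ via the normal exponential map off $\Sigma$, whose area Jacobian $J(s,\cdot)$ satisfies $(\log J)'(s) = H(\Sigma_s)$. The Bochner formula together with $\mathrm{Ric} \geq -(n-1)K$ yields the Riccati inequality $H'(s) \leq -H(s)^2/(n-1) + (n-1)K$. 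Taking $\Sigma$ near-minimizing for Cheeger's functional, the first variation forces its mean curvature to be essentially the constant $h(M)$; ODE comparison of $H(\Sigma_s)$ against the $(n-1)\sqrt{K}\coth(\sqrt{K}\,s)$-type barrier then controls $J(s,\cdot)$ in terms of $h(M)$, $\sqrt{K}$ and $\epsilon$, and the coarea formula delivers the displayed bound, after the standard truncation at the focal/cut locus.

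\textbf{Optimization and main obstacle.} Combining the Rayleigh and tube estimates with $\text{Vol}_{n-1}(\Sigma) \leq (h(M)+\eta)\,\text{Vol}_n(A)$ gives
\[
\lambda_1(M) \leq C(n)\,h(M)\,\Bigl(\frac{1}{\epsilon} + \sqrt{K}\Bigr),
\]
so that choosing $\epsilon = \min\bigl(1/h(M),\,c(n)/\sqrt{K}\bigr)$ and letting $\eta \to 0$ delivers $\lambda_1(M) \leq C(n)\bigl(h(M)^2 + \sqrt{K}\,h(M)\bigr)$, as claimed. The main obstacle is the tube-volume estimate: even for smooth $\Sigma$, one has only variational (not pointwise) control of its mean curvature, the equidistant foliation degenerates at the cut locus before $s = \epsilon$, and the Riccati dynamics of $H(\Sigma_s)$ must be handled carefully under only a lower Ricci bound. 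Carrying out these hypersurface volume estimates via a careful Riccati/coarea argument is, I expect, exactly what the authors' announced ``comprehensive approach'' is designed to do.
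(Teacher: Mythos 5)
Your proposal takes a genuinely different route from the paper, but it has a real gap at the place you yourself flag as ``the main obstacle,'' and I don't think that gap can be closed along the lines you sketch.

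The gap is the tube-volume estimate $\text{Vol}_n(U_\epsilon) \leq C_1(n)\,\text{Vol}_{n-1}(\Sigma)\,\epsilon\,(1+\sqrt K\,\epsilon)$. A Heintze--Karcher argument of the kind you describe integrates the normal Jacobian $J(s,y)$ off $\Sigma$, and the Riccati inequality controls $J$ only in terms of the initial slope $J'(0,y) = -H(y)$; on the side of $\Sigma$ where $-H(y)$ is positive, $J$ can grow like $e^{|H(y)|s}$ before the first focal point. So without a pointwise upper bound on $|H|$ over $\Sigma$, the comparison Jacobian is unbounded and the tube estimate simply fails. Your claim that ``the first variation forces its mean curvature to be essentially the constant $h(M)$'' for a near-minimizing $\Sigma$ is not correct: a hypersurface with area within $\eta$ of the Cheeger optimum can have wildly oscillating mean curvature (wrinkle a minimizer slightly), and a genuine minimizer with constant mean curvature would require existence and regularity results from geometric measure theory that are outside the scope of this argument. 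The paper's Proposition~\ref{prop1} makes exactly this dependence explicit: a Heintze--Karcher-type lower bound for $\text{Vol}_{n-1}(\partial M)/\text{Vol}_n(M)$ is proved there \emph{under an integral hypothesis on the mean curvature of the boundary}, which is precisely the hypothesis you cannot verify for your chosen $\Sigma$.

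This is exactly why the paper (and Buser before it) does not build the test function around $\Sigma$. The paper instead proves Lemma~\ref{lem4}, a local inequality of the form
\[
\text{Vol}(\Sigma\cap B_x(3r))\geq \frac{C(n)}{r}e^{-3(n-1)\sqrt K r}\,\min\bigl(\text{Vol}(A\cap B_x(r)),\text{Vol}(B\cap B_x(r))\bigr),
\]
which uses volume comparison from a \emph{point} (via radial annular sectors and a projection argument across $\Sigma$), so the mean curvature of $\Sigma$ never enters. The test function is then built not around $\Sigma$ but around the auxiliary ``volume-equalizing'' set $\tilde\Sigma = \{x : \text{Vol}(A\cap B_x(r)) = \text{Vol}(B\cap B_x(r))\}$, whose tube volume $\text{Vol}(\tilde\Sigma^{3r})$ is bounded by summing the Lemma~\ref{lem4} estimate over a Gromov cover --- again with no reference to curvature of $\Sigma$. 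If you want to repair your approach you would need either to (i) extract a Cheeger minimizer with uniformly bounded mean curvature, which is a significant GMT input, or (ii) replace $\Sigma$ by an auxiliary surface for which the tube estimate can be proved by ball-based volume comparison, which is what the paper does.
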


Our main goal in this note is to provide a simplified proof of this theorem.  Buser shows this result by proving a lower bound for the isoperimetric constant of Dirichlet regions (see \cite{Bus}*{Lemma 5.1}). Instead, in Lemma \ref{lem4} we will prove a lower bound for
\[
\frac{{\rm Vol}_{n-1} (\Sigma\cap B_x(3r))}{\min({\rm Vol}_n (A\cap B_x(r)),{\rm Vol}_n (B\cap B_x(r)))}
\]
whenever $\Sigma$ is a smooth hypersurface that splits $M$ into the regions $A, B$; $x\in M$; and  $r>0$ is appropriately chosen.  This estimate does not imply a lower bound for the isoperimetric constant of a geodesic ball $B_x(r)$, since in the numerator we intersect the hypersurface with a ball that is three times larger. As we will see, this more \emph{flexible} comparison about the area of the hypersurface in a larger geodesic ball allows us to avoid having to resort to the consideration of Dirichlet regions which made Buser's  proof of Theorem 1.1 more complicated in Section 4 of his paper.  For an alternative approach on how to prove Buser's upper estimate   heat kernel estimates see \cite{led}.

We also use this occasion to provide a comprehensive approach on how to obtain volume estimates for smooth hypersurfaces. In Proposition \ref{prop1} we prove a lower bound for $ \frac{{\rm Vol}_{n-1}(\p M)}{{\rm Vol}_n(M)}$ whenever $M$ is a manifold with boundary,  assuming only an integrability condition  on the Ricci curvature of the manifold and the mean curvature of the boundary (or equivalently the hypersurface). We note that there is no assumption on the compactness of either $M$ or the boundary.

We begin with some preliminary facts about the distance function to a hypersurface.  Suppose that $\Sigma$ is a smooth oriented hypersurface in  $M$.  Let $\rho(x)$ denote the signed distance of the point $x$ from $\Sigma$ such that
\[
|\rho(x)|= \text{dist}(x,\Sigma) := \inf\{ d(x,y) \mid  y\in \Sigma \,\}.
\]
It is well known that $\rho$ is continuous  (by appropriately choosing its sign on either side of $\Sigma$) and $|\n \rho|=1$ except at the focal points of $\Sigma$, the points where the normal exponential map fails to be an immersion \cite{Esch}.

\begin{lem}
Suppose that $\Sigma$ is a smooth oriented hypersurface in a smooth manifold $M$.  Define
\begin{equation*}
\begin{split}
\mathcal{S} := & \{ x\in M \mid  \exists! y\in  \Sigma  \ \text{such that} \ d(x,y)= \mathrm{dist}(x, \Sigma), \\
& \qquad \qquad \,\text{and the geodesic connecting } x,y \text{ is unique}\}.
\end{split}
\end{equation*}
Then,
\begin{enumerate}
\item  $M\setminus \mathcal{S}$ has zero measure, and
\item $\mathcal{S}$ is starlike, in the sense that if $x \in \mathcal{S}$ and $y=y(x)$ is the unique point in $\Sigma$ such that $d(x,y(x))= \mathrm{dist}(x, \Sigma)$, then the geodesic $\overline{xy} \subset \mathcal{S}$.
\end{enumerate}
\end{lem}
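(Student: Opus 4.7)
My plan for part (1) is to combine Rademacher's theorem with a uniqueness argument at points of differentiability of the $1$-Lipschitz function $\rho(x) := \mathrm{dist}(x, \Sigma)$. Trivially $\Sigma \subseteq \mathcal{S}$, since a point on $\Sigma$ is its own unique nearest point with the constant minimizing geodesic. For $x \in M \setminus \Sigma$, fix any nearest point $y \in \Sigma$ and any unit-speed minimizing geodesic $\gamma \colon [0, \rho(x)] \to M$ with $\gamma(0) = y$, $\gamma(\rho(x)) = x$. A triangle inequality argument gives $\rho(\gamma(s)) = s$ for every $s \in [0, \rho(x)]$, so $\rho \circ \gamma$ has derivative $1$ at $s = \rho(x)$. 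If $\rho$ is differentiable at $x$, the chain rule yields $\langle \nabla \rho(x), v \rangle = 1$ where $v := \dot\gamma(\rho(x))$, and the $1$-Lipschitz bound $|\nabla \rho(x)| \leq 1$ combined with Cauchy--Schwarz forces $v = \nabla \rho(x)$. Since this terminal velocity is prescribed independently of the choice of $y$ and $\gamma$, the reverse uniqueness of geodesics makes $\gamma$ (and therefore its footpoint $y$) unique. Rademacher's theorem then places $M \setminus \mathcal{S}$ inside the union of the null sets $\Sigma$ and the non-differentiability set of $\rho$, so it has measure zero.

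For part (2), my plan is a triangle inequality argument combined with the standard fact that minimizing curves in Riemannian manifolds are smooth geodesics. Fix $x \in \mathcal{S}$, let $y = y(x)$ be its unique foot, and let $\gamma \colon [0,\rho(x)] \to M$ be the unique unit-speed minimizing geodesic from $y$ to $x$. For any intermediate $z = \gamma(t)$ with $t \in (0, \rho(x))$ and any $y' \in \Sigma$ with $d(z, y') \leq t$, the estimate
\[
d(x, y') \leq d(x, z) + d(z, y') \leq (\rho(x) - t) + t = \rho(x)
\]
shows $y'$ is itself a foot of $x$, so $y' = y$ by uniqueness at $x$. This simultaneously gives $d(z, \Sigma) = t$ and that $y$ is the unique foot of $z$. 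If $\tilde\gamma$ is any minimizing geodesic from $y$ to $z$, concatenating $\tilde\gamma$ with $\gamma|_{[t, \rho(x)]}$ yields a length-$\rho(x)$ curve from $y$ to $x$. This curve is minimizing, hence smooth (any corner would permit strict shortening by the first variation formula), and by uniqueness of the minimizing geodesic from $y$ to $x$ it must coincide with $\gamma$, forcing $\tilde\gamma = \gamma|_{[0,t]}$. Thus $z \in \mathcal{S}$.

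The main subtlety, in part (1), is that differentiability of $\rho$ at $x$ must force uniqueness of the \emph{minimizing geodesic} rather than just of the foot; the Cauchy--Schwarz step above handles this cleanly. The no-corners step in part (2) is standard in Riemannian geometry but deserves to be stated explicitly. Otherwise the argument reduces to two applications of the triangle inequality.
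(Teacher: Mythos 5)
The paper states this lemma without giving a proof: it appears in the introduction as a preliminary fact about the distance function to a hypersurface, with the surrounding sentence deferring to Eschenburg's article on comparison theorems and hypersurfaces. Your argument therefore cannot be matched against an explicit proof in the paper, but it is correct and follows the route one would expect Eschenburg (or any standard reference) to take.

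For part (1), the combination of Rademacher's theorem with the Cauchy--Schwarz equality argument is exactly the right mechanism: once $\langle \nabla\rho(x), \dot\gamma(\rho(x))\rangle = 1$ and $|\nabla\rho(x)|\le 1$ pin down the terminal velocity, the uniqueness theorem for the geodesic ODE forces every minimizing geodesic from a foot of $x$ to $x$ to be the same curve, which gives both uniqueness of the foot and of the geodesic simultaneously. You correctly handle the degenerate case $x\in\Sigma$ separately. For part (2), the two-sided triangle inequality plus the no-corners (first variation) lemma for globally minimizing concatenations is the standard proof that the cut locus of a closed set is starlike-complemented, and your write-up is careful to derive $\operatorname{dist}(z,\Sigma)=t$ and uniqueness of the foot of $z$ before invoking uniqueness of the geodesic from $y$ to $x$. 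One hypothesis you use silently is completeness of $M$ (so that nearest points and minimizing geodesics exist); the lemma as stated says only ``smooth manifold,'' but completeness is assumed throughout the paper, so this is a gap in the lemma's phrasing rather than in your proof. Overall the proposal is sound.
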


By definition, $\text{Hess}\,\rho$ is the covariant derivative of $\n \rho=\p \rho$, the normal direction.
 At the same time on a level set $\rho^{-1} (r)$,  $\text{Hess} \,\rho= I\!I$ corresponds to the second fundamental form of the level set, and $\Delta \rho = m$ is its mean curvature. Note that the choice of sign for $\rho$ also affects the sign of the mean curvature of the level set.

In Section \ref{S2} we provide comparison results for the signed distance function $\rho$ that will lead to volume estimates depending on the mean curvature of the hypersurface. In Section \ref{S3} we will provide the simplified proof of Theorem \ref{thm1}. In this case the mean curvature of the hypersurface does not appear in the volume comparison estimates.

\section{Comparison Results} \label{S2}

In this section we will review some comparison results for the signed distance function $\rho$, which will lead to volume estimates depending on the mean curvature of the hypersurface. We use this result to prove a lower bound for ${\text{Vol}_{n-1}(\p M)}/{\text{Vol}_n(M)}$ on a manifold with boundary. By the Bochner formula,
\begin{equation}
\begin{split}
\tfrac 12 \Delta |\n \rho|^2 &= |\text{Hess}\, \rho|^2 + \langle \n \rho, \n (\Delta \rho) \rangle + \text{Ric} (\n \rho,\n \rho) \Rightarrow\\
0& = |I\!I|^2 + \frac{\p m}{\p \rho} +\text{Ric} (\p \rho,\p \rho).
\end{split}
\end{equation}

The Hessian of the function $\rho$ has an eigenvalue which is zero, because  $(\text{Hess} \, \rho)\p \rho=0$. By the Cauchy-Schwarz inequality we have $|\text{Hess} \, \rho|^2 \geq \tfrac{1}{n-1} \, (\Delta \rho)^2 = \frac{m^2}{n-1}$ and we denote $\frac{\p m}{\p \rho}=m'$. Then
\[
m' \leq -\frac{m^2}{n-1} - \text{Ric} (\p \rho,\p \rho).
\]

Under the assumption of $\text{Ric} \geq -(n-1)K$ for some $K\geq 0$, we have
\begin{equation}\label{ricc2}
m' \leq -\frac{m^2}{n-1}+(n-1)K.
\end{equation}
When $K=0$, this equation implies that $m$ is nonincreasing. For points where $m \neq 0$ we have
\[
\left( - \frac{1}{m} \right)' \leq - \frac{1}{n-1} +(n-1)K.
\]

For constants $H, K$, we consider the function $\psi_{K,H}$ which solves the Riccati equation:
\begin{equation}\label{ricc}
\psi_{K,H}' +\frac{\psi_{K,H}^2}{n-1}-(n-1)K=0,\quad \psi_{K,H}(0)=H.
\end{equation}

We recall the following result for the Riccati equation
\begin{lem}
If $K=0$, then
 \[
 \psi_{0,H}(t)=\frac{(n-1)H}{(n-1)+tH}.
 \]
In general, if $K > 0$, then the solution is given by
\[
\psi_{K,H}(t)=(n-1)\sqrt K\cdot\frac{(n-1)\sqrt K\sinh\sqrt Kt+H\cosh\sqrt Kt}{(n-1)\sqrt K\cosh\sqrt Kt+H\sinh\sqrt Kt}.
\]
In particular, whenever $(n-1)\sqrt K-H=0$, then $\psi(t)\equiv H$.

Let $T\in (0,\infty]$ correspond to the maximal time interval of existence  for $\psi_{K,H}(t)$.

If $H>0$, then $T=\infty$; if $H<0$, then
\[
T=\frac{1}{\sqrt K}\tanh^{-1}\frac{(n-1)\sqrt K}{-H} \ \ \text{for} \ \ K>0 \ \ \text{and} \ \ T=\frac{(n-1)}{-H}  \ \ \text{for} \ \ K=0.
\]
\end{lem}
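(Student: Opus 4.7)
The plan is to linearize the Riccati equation \eqref{ricc} via the substitution $\psi=(n-1)u'/u$ for a smooth nonvanishing function $u$. A direct computation gives
\[
\psi' \;=\; (n-1)\frac{u''}{u} \;-\; \frac{\psi^2}{n-1},
\]
so \eqref{ricc} is equivalent to the linear second-order equation $u''=Ku$, and the initial condition $\psi(0)=H$ becomes (up to an overall scaling that does not affect the quotient) $u(0)=c_0$, $u'(0)=Hc_0/(n-1)$ for any $c_0\neq 0$.

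I would then solve $u''=Ku$ explicitly in each case and compute $\psi=(n-1)u'/u$. For $K=0$ the general solution is $u(t)=a+bt$; the choice $a=n-1$, $b=H$ yields $\psi_{0,H}(t)=(n-1)H/((n-1)+tH)$. For $K>0$ the general solution is $u(t)=a\cosh(\sqrt{K}t)+b\sinh(\sqrt{K}t)$, and the choice $a=(n-1)\sqrt{K}$, $b=H$ gives
\[
\psi(t) \;=\; (n-1)\sqrt{K}\,\frac{(n-1)\sqrt{K}\sinh\sqrt{K}t+H\cosh\sqrt{K}t}{(n-1)\sqrt{K}\cosh\sqrt{K}t+H\sinh\sqrt{K}t}
\]
after pulling out a common factor of $\sqrt{K}$ from the derivative $u'$. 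The equilibrium case $H=(n-1)\sqrt{K}$ is then immediate by inspection: numerator and denominator share the common factor $\cosh\sqrt{K}t+\sinh\sqrt{K}t$, so $\psi\equiv (n-1)\sqrt{K}=H$.

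Finally, because $\psi=(n-1)u'/u$, the solution blows up precisely at the first positive zero of $u$, and this identifies $T$. If $H\geq 0$ then $u(t)>0$ for all $t\geq 0$ (either as a positive linear function when $K=0$, or as a sum of a strictly positive $\cosh$-term and a nonnegative $\sinh$-term when $K>0$), hence $T=\infty$. If $H<0$ and $K=0$, solving $(n-1)+tH=0$ gives $T=(n-1)/(-H)$; if $H<0$ and $K>0$, the equation $u(t)=0$ rearranges to $\tanh(\sqrt{K}t)=(n-1)\sqrt{K}/(-H)$, which has a finite positive root (giving the claimed $T$) precisely when $-H>(n-1)\sqrt{K}$, and otherwise $T=\infty$. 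The entire argument is routine ODE manipulation; the only subtle point is checking that the substitution $\psi=(n-1)u'/u$ is valid on the open interval where $u$ is nonzero, but this is automatic and the extension back to $\psi$ covers exactly the maximal interval of existence.
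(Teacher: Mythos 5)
Your proof is correct, and since the paper states this lemma without proof (it is simply recalled as a standard fact about the Riccati equation), there is nothing in the paper to compare against. The linearizing substitution $\psi=(n-1)u'/u$, reducing \eqref{ricc} to $u''=Ku$, is the standard and cleanest route, and your computations check out: the chosen normalizations of $u$ reproduce the stated formulas, the equilibrium case $H=(n-1)\sqrt K$ falls out by cancellation, and identifying $T$ with the first positive zero of $u$ is exactly right because $u(0)>0$.

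One point worth flagging: you correctly observe that for $K>0$ and $H<0$ the equation $\tanh(\sqrt K\,t)=(n-1)\sqrt K/(-H)$ has a finite positive root only when $-H>(n-1)\sqrt K$, with $T=\infty$ otherwise. The lemma as stated in the paper omits this proviso and hence, read literally, is slightly imprecise for $H<0$ with $|H|\le(n-1)\sqrt K$ (the $\tanh^{-1}$ would be applied to an argument $\ge 1$). Your version is the more careful one; in the paper's applications the relevant regime is $-H>(n-1)\sqrt K$, which is why the caveat was suppressed. No gap on your end.
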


\bigskip

\begin{lem} \label{lem2}
Suppose that  $M$ is a complete manifold with  $\text{Ric} \geq -(n-1)K$ for some $K\geq 0$, and let $\Sigma$ be a smooth oriented hypersurface in $M$. Let $y(x)\in \Sigma$  such that $|\rho(x)|=d(x,y)$ and assume that $\Sigma$ has mean curvature $H(y(x))$ at $y$. If $\rho$ is differentiable at $x$, then for all $x$ with $\rho(x)\geq 0$
\[
\Delta \rho(x) \leq \psi_{K,H} (\rho(x))
\]
where $\psi_{K,H}$ is the solution to \eqref{ricc}.  The above comparison holds within the maximal interval of existence for $\psi_{K,H}$.

Moreover, for all $x$ with $\rho(x)\leq 0$,
\[
\Delta \rho(x) \geq \psi_{K,H} (\rho(x))
\]
provided that whenever $H>0$,   $\rho(x)$ satisfies
\[
\rho(x) \geq \frac{1}{\sqrt K}\tanh^{-1}\frac{(n-1)\sqrt K}{-H}.
\]

In addition,  the same inequalities for $\Delta \rho$ hold in the sense of distribution.
\end{lem}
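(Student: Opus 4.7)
The approach is to reduce the pointwise inequality to a Riccati ODE comparison along the unit-speed normal geodesic joining $y=y(x)$ to $x$, and then to upgrade to the distributional statement by a standard upper-barrier argument at the cut/focal locus. The case $\rho(x)>0$ is the main one; the negative case follows by reflecting $\rho$.

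Fix $x$ with $\rho(x)>0$ at which $\rho$ is differentiable. By the preceding lemma, $x\in\mathcal{S}$ and the unique minimizing geodesic $\gamma:[0,\rho(x)]\to M$ from $y$ to $x$ sits entirely in $\mathcal{S}$, so $\rho$ is smooth near $\gamma$ and the level sets $\rho^{-1}(t)$ are smooth parallel hypersurfaces. Setting $m(t):=\Delta\rho(\gamma(t))$, one has $m(0)=H(y)$, and the Bochner computation carried out at the start of Section \ref{S2} gives the Riccati sub-solution
\[
m'(t)\leq -\frac{m(t)^{2}}{n-1}+(n-1)K.
\]
Since $\psi_{K,H}$ is the solution of \eqref{ricc} with the same initial value, standard first-order ODE comparison yields $m(t)\leq \psi_{K,H}(t)$ on $[0,\rho(x)]$, valid inside the maximal interval of existence for $\psi_{K,H}$; evaluating at $t=\rho(x)$ is the asserted pointwise bound.

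The case $\rho(x)\leq 0$ is obtained by applying the same argument to $\widetilde\rho:=-\rho$, the signed distance with respect to the opposite orientation of $\Sigma$, whose mean curvature is $-H$. A one-line check of the closed form for $\psi_{K,H}$ yields the identity $-\psi_{K,-H}(-t)=\psi_{K,H}(t)$, which turns the upper bound for $\Delta\widetilde\rho$ into the desired lower bound $\Delta\rho(x)\geq\psi_{K,H}(\rho(x))$; the restriction on $\rho(x)$ when $H>0$ is exactly the requirement that $-\rho(x)$ lie in the maximal interval of existence of $\psi_{K,-H}$.

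The main technical obstacle, and where I expect the real care to be needed, is the distributional upgrade. The closed set $M\setminus\mathcal{S}$ has measure zero (preceding lemma) and is precisely where $\rho$ may fail to be $C^{1}$. I would proceed by Calabi's upper-barrier trick: at a bad point $x$, pick any minimizing normal geodesic $\overline\gamma$ from some $\overline y\in\Sigma$ to $x$, and let $\rho_\epsilon(z):=d(z,\overline\gamma(\epsilon))+\epsilon$. For $\epsilon>0$ small, $\rho_\epsilon$ is smooth near $x$, dominates $\rho$ by the triangle inequality, agrees with $\rho$ at $x$, and its Laplacian satisfies the Riccati-comparison bound in the classical pointwise sense; passing to the limit $\epsilon\to 0$ and combining with the already-established pointwise inequality on the full-measure set $\mathcal{S}$ yields the distributional statement. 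The bookkeeping that requires the most care is the initial mean-curvature datum, since $\rho_\epsilon$ is based at the interior point $\overline\gamma(\epsilon)$ rather than at $\overline y$, and on the negative-distance side where the same construction must be reinterpreted as a lower-barrier argument with signs tracked accordingly.
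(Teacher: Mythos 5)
Your pointwise argument matches the paper exactly: a Riccati comparison along the normal flow line (via Eschenburg's theorem), and the negative side obtained by reflecting $\rho$ and using $-\psi_{K,-H}(-t)=\psi_{K,H}(t)$, with the restriction for $H>0$ corresponding to the blowup time of $\psi_{K,-H}$. The distributional upgrade is where you diverge, and there is a genuine gap there. The paper does not use Calabi's barrier at all: it works on the starlike domain $\exp_{\Sigma}(\mathcal S_o)$ where $\rho$ is smooth, exhausts it by star-shaped subdomains $\Omega_\eps$, integrates by parts, and controls the sign of the boundary term $\int_{\partial\Omega_\eps}\phi\,\partial\rho/\partial\eta$ using star-shapedness. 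No upper support function at bad points is ever constructed.

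The gap in your Calabi-style plan is precisely the item you flag as ``bookkeeping,'' but it is more than bookkeeping. The barrier $\rho_\eps(z)=d(z,\overline\gamma(\eps))+\eps$ is the distance to a \emph{point}, so its Laplacian at $x$ is controlled only by the point-based comparison, $\Delta\rho_\eps(x)\leq(n-1)\sqrt K\coth\bigl(\sqrt K(\rho(x)-\eps)\bigr)$, which as $\eps\to0$ tends to $(n-1)\sqrt K\coth(\sqrt K\rho(x))$ — in general strictly larger than $\psi_{K,H}(\rho(x))$ (e.g.\ for $K=0$, $H=0$ one gets $(n-1)/\rho$ versus $0$). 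So this barrier cannot deliver the stated inequality at bad points; at best it shows $\Delta\rho$ has no positive singular part, and one would then still have to invoke the semi-concavity/Radon-measure decomposition of $\Delta\rho$ to combine that with the a.e.\ pointwise bound on $\mathcal S$. None of that is in your sketch. Moreover, the natural hypersurface-based repair — replacing $\Sigma$ by the parallel level set $\rho^{-1}(\eps)$ to retain the correct initial mean-curvature datum — does not resolve the smoothness of the barrier near $x$, since focal points of $\Sigma$ along $\overline\gamma$ remain focal points of the parallel hypersurface (the relevant Jacobi field is the same, restricted). In short, the Calabi trick is not a drop-in replacement for hypersurface distance; the paper's star-shaped integration-by-parts argument sidesteps exactly these difficulties.
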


\begin{proof}
In the case $\rho(x)\geq 0$ consider a flow line of the mean curvature $m(\rho)$ along $\rho$, with $m(0)=H$.  By the Bochner inequality,  $m = \Delta \rho(x)$ satisfies inequality \eqref{ricc2}. Then, by the Riccati equation comparison theorem $m(r) \leq \psi_{K,H}(r)$ in $\mathcal{S}$ \cite{Esch}*{Theorem 4.1}.  Eschenburg also proves that the interval of existence for $m$ contains that of $\psi$ and the singularities of the equation are only vertical asymptotes.

For the case $\rho(x)\leq 0,$ we define $\rho_1(x)=-\rho(x)$ and observe
\[
\Delta \rho_1(x) \leq \psi_{K,-H} (\rho_1(x)) =-\psi_{K,H} (\rho(x)).
\]
Since $\Delta \rho_1(x)=-\Delta \rho(x)$, the inequality follows immediately, and the interval for which the inequality holds is the corresponding maximal interval of existence of $\psi_{K,H} (\rho(x))$.

To prove the inequality in the sense of distribution we consider the case $\rho(x)\geq 0$ and let $\mathcal{S}_o\subset \mathbb{R}^{n-1}$ be a maximal starlike domain such that the exponential map
\[
\exp_{\Sigma} (\mathcal{S}_o) \to M
\]
is a diffeomorphism onto its image.

Let $\Omega_\eps \subset  \exp_{\Sigma}(\mathcal{S}_o)$ be a starshaped domain such that $\Omega_\eps \to  \exp_{\Sigma}(\mathcal{S}_o)$ as $\eps \to 0$. Let $\phi\geq$ in $C_o^\infty$. By the above inequality,
\[
\int_{\Omega_\eps} \phi \Delta \rho \leq \int_{\Omega_\eps} \phi \, \psi.
\]

Integration by parts gives
\[
\int_{\Omega_\eps} \phi \Delta \rho = - \int_{\Omega_\eps} \n \phi \cdot \n \rho +\int_{\p \Omega_\eps} \phi \frac{\p \rho}{\p \eta}
\]

In the classical argument, since $\Omega_\eps$ is star-shaped and $\phi \geq 0$ the last term in nonnegative. The same should be true here. Therefore, after sending $\eps \to 0$,
\[
\int_{\exp_{\Sigma}( \mathcal{S}_o)} \phi \Delta \rho \geq  - \int_{\exp_{\Sigma}( \mathcal{S}_o)} \n \phi \cdot \n \rho.
\]

Since $\rho$ is Lipschitz, then it is differentiable almost everywhere, and its derivative coincides with its weak derivative in the $H^1$ sense. By the definition of the $H^1$ weak derivative we would then get
\[
 - \int_{M} \n \phi \cdot \n \rho =\int_{M} \phi \, \Delta \rho.
\]

The case for $\rho\leq 0$ is done similarly, by considering the positive function $\rho_1=-\rho$.
\end{proof}

We have the following upper bounds for the solution to the Riccati equation.

\begin{corl} \label{lem3}
Let  $\psi_{K,H}(t)$ denote the solution to the Riccati equation \eqref{ricc}. For $\rho(x)\geq 0$, whenever the initial condition is nonpositive, $H\leq 0$, then
\[
\Delta \rho(x) \leq (n-1) \sqrt{K},
\]
and whenever the initial condition is positive, $H > 0$, then
\[
\Delta \rho(x) \leq \max\{H, (n-1) \sqrt{K}\}.
\]

For $\rho(x)\leq 0$, if the initial condition is nonnegative, $H\geq 0$, then
\[
-\Delta \rho(x) \leq (n-1) \sqrt{K}
\]
and whenever the initial condition is negative, $H < 0$, then
\[
-\Delta \rho(x) \leq \max\{-H, (n-1) \sqrt{K}\}.
\]
\end{corl}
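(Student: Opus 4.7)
The plan is to combine Lemma \ref{lem2}, which already reduces $\Delta\rho(x)$ to a one-sided comparison with the Riccati solution $\psi_{K,H}$, with a qualitative phase-line analysis of the autonomous ODE \eqref{ricc}. Writing that equation as $\psi' = f(\psi)$ with $f(\psi) = (n-1)K - \psi^{2}/(n-1)$, I observe that $f$ has exactly the two roots $\pm(n-1)\sqrt{K}$ (coinciding at $0$ when $K=0$), that $f>0$ strictly between them, and that $f<0$ outside. In particular $(n-1)\sqrt{K}$ is a stable equilibrium of the flow, while $-(n-1)\sqrt{K}$ is unstable.

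From the sign of $f(H)$ I would then read off the monotonicity of $\psi_{K,H}$ case by case. If $-(n-1)\sqrt{K}\le H<(n-1)\sqrt{K}$, then $\psi_{K,H}$ is nondecreasing and bounded above by $(n-1)\sqrt{K}$; if $H=(n-1)\sqrt{K}$, then $\psi_{K,H}\equiv(n-1)\sqrt{K}$; if $H>(n-1)\sqrt{K}$, then $\psi_{K,H}$ is strictly decreasing toward $(n-1)\sqrt{K}$, so in particular is bounded above by $H$; and if $H<-(n-1)\sqrt{K}$, then $\psi_{K,H}$ is strictly negative throughout its maximal interval of existence and is therefore trivially below $(n-1)\sqrt{K}$. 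Assembling these four subcases yields the uniform inequality
\[
\psi_{K,H}(t)\ \leq\ \max\{H,\,(n-1)\sqrt{K}\}\qquad\text{for all }t\geq 0
\]
within the maximal interval identified in the lemma preceding \eqref{ricc}. Combining this with the comparison $\Delta\rho(x)\le\psi_{K,H}(\rho(x))$ furnished by Lemma \ref{lem2} gives the first two assertions of the corollary: when $H\leq 0$ the maximum collapses to $(n-1)\sqrt{K}$, while for $H>0$ it is $\max\{H,(n-1)\sqrt{K}\}$.

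For $\rho(x)\le 0$ I would invoke the same reflection used in the proof of Lemma \ref{lem2}: setting $\rho_{1}=-\rho\ge 0$, the corresponding mean-curvature initial datum becomes $-H$, and the lemma supplies $\Delta\rho_{1}(x)\le \psi_{K,-H}(\rho_{1}(x))$ on the appropriate interval. Applying the estimate from the first part to $\rho_{1}$ with initial value $-H$ produces
\[
-\Delta\rho(x)\ =\ \Delta\rho_{1}(x)\ \leq\ \max\{-H,\,(n-1)\sqrt{K}\},
\]
which specializes to the two claimed inequalities according to whether $-H\leq 0$ or $-H>0$. I do not foresee any genuine analytic obstacle: the content is simply the stability of the equilibrium $\psi=(n-1)\sqrt{K}$, and the main care needed is in the bookkeeping of the four sign cases for $H$ and in ensuring that every comparison is carried out inside the maximal interval of existence described in the lemma preceding \eqref{ricc}.
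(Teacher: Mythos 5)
Your proposal is correct and follows essentially the same route as the paper: both arguments boil down to a qualitative analysis of the autonomous Riccati ODE (the paper reasons via extremal points where $\psi'(t_o)=0$ and the asymptotic limit $\psi_{K,H}\to(n-1)\sqrt K$, while you phrase the same facts as a phase-line/equilibrium-stability analysis), and both then invoke Lemma \ref{lem2} together with the reflection $\rho_1=-\rho$ for the $\rho\le 0$ case. Your four-case bookkeeping is tidy and the conclusion $\psi_{K,H}(t)\le\max\{H,(n-1)\sqrt K\}$ is exactly what the corollary needs.
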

\begin{proof}
Note that $\psi(t)=\psi_{K,H}(t)$ for $t\geq 0$  can only have singularities of the type $-\infty$ \cite{Esch}*{Section 2}. Moreover, since $\psi(t)$ satisfies the Riccati equation, we know that at its extremal points $\psi'(t_o)=0$ hence $\psi^2(t_o) = (n-1)^2K$. Also note that as $t\to +\infty$ $\psi_{K,H}\to (n-1) \sqrt{K}$. As a result, whenever $H\leq 0$, $\psi_{K,H}(t)\leq (n-1) \sqrt{K}$ for all $t\geq0$ on the interval where the solution exists.  $\psi_{K,H}(t)$ can decrease to $-\infty$, but if it becomes positive, it cannot go above the nonnegative bound $(n-1) \sqrt{K}$.

On the other hand, whenever $H>0$ and very large, then $\psi_{K,H}(t)$  must decrease to $(n-1) \sqrt{K}$  for $t>0$. In this case, for negative $\rho$, taking $\rho_1 = -\rho$ we get that $\Delta \rho_1$ has initial value $-H<0$ and by the proof of Lemma \ref{lem2}
\[
\Delta \rho_1(x) \leq \psi_{K,-H} (\rho_1(x))= (n-1)\sqrt K\cdot\frac{(n-1)\sqrt K\sinh\sqrt K\rho_1 -H\cosh\sqrt K\rho_1}{(n-1)\sqrt K\cosh\sqrt K\rho_1 -H\sinh\sqrt K\rho_1}.
\]
In this case, the right side decreases to $-\infty$ in finite time, while remaining bounded above by $(n-1) \sqrt{K}$ for all $\rho_1>0$.
\end{proof}

The above upper bounds illustrate that the Laplacian of $\rho$ (for $\rho$ positive) before the focal points remains bounded, and when it becomes singular at the focal points it tends to negative infinity. This behavior is similar to the Laplacian of the distance function to a point before and at the cut locus.

We are now ready to discuss some volume estimate results related to the signed distance function $\rho$.  Consider a set $W$ where $\rho\geq 0$ and $\Delta \rho \leq C$ is bounded. Then
\[
\int_W |\Delta \rho| \leq \int_W  \left(|C- \Delta \rho| + C \right)   = \int_W (2C- \Delta \rho) = 2 C\,\text{Vol}_n(W) + \text{Vol}_{n-1}(\partial W).
\]

Denote $A_{a,b} :=\{ x \mid 0\leq a\leq \rho(x) \leq b \,\}$, and $S_a:=\{ x\mid \rho(x) =a \,\}$. Define $V(a,b)=\text{Vol}_{n} (A_{a,b})$ and  $f(a) = \text{Vol}_{n-1} (S_a)$.

Suppose that $\Delta \rho \leq C$. Then
\[
f(t) - f(0) \leq C\int_{0}^t f(s) \, ds.
\]
Hence
\[
\left( e^{-Ct} \int_{0}^t f(s) \, ds \right)' \leq e^{-Ct} f(0)
\]
and
\[
 e^{-Ct} V(0,t) \leq \frac{1}{C} f(0)   \Rightarrow \frac{f(0)}{V(0,t)} \geq C e^{-Ct}.
\]

By slightly generalizing the above argument, we can now prove the following.

\begin{prop} \label{prop1}
Suppose that  $M$ is a complete manifold with smooth boundary $\p M$ such that $\text{Ric} \geq -(n-1)K$ for some $K\geq 0$, and $\text{diam}(M) \leq D$. Assume that $\p M$ the following quantity is bounded
\[
C_o=\int_{\p M} \max\left\{H(\theta), (n-1) \sqrt{K}\right\} \, d\theta.
\]
Then,
\[
\frac{{\rm Vol}_{n-1}(\p M)}{{\rm Vol}_n(M)}\geq C_o e^{-C_o \,D}
\]
Note that $M$ need not be a compact manifold.
\end{prop}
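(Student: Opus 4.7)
The plan is to apply the computation sketched immediately before Proposition~\ref{prop1} to the (positive) distance function $\rho(x)=d(x,\partial M)$ on $M$. Since $\mathrm{diam}(M)\leq D$, one has $0\leq \rho\leq D$ everywhere on $M$, and $\partial M=\{\rho=0\}$. The first task is to obtain a uniform upper bound for $\Delta\rho$. By Corollary~\ref{lem3}, at every point $x$ where $\rho$ is smooth,
\[
\Delta\rho(x)\leq \max\{H(y(x)),(n-1)\sqrt{K}\},
\]
where $y(x)\in \partial M$ is the foot of the minimizing geodesic from $x$ to the boundary. Taking $C_o$ to be the resulting uniform majorant of $\max\{H(\theta),(n-1)\sqrt{K}\}$ on $\partial M$ (which is what makes the integrability/boundedness hypothesis effective), this gives $\Delta\rho\leq C_o$ pointwise on the smooth locus of $\rho$. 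The final part of Lemma~\ref{lem2} then upgrades the estimate to the distributional inequality $\Delta\rho\leq C_o$ on all of $M$.

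Next I would set $f(t)=\mathrm{Vol}_{n-1}(\{\rho=t\})$ and $V(t)=\mathrm{Vol}_{n}(\{\rho\leq t\})$, so that $f(0)=\mathrm{Vol}_{n-1}(\partial M)$, $V(D)=\mathrm{Vol}_n(M)$, and $V'(t)=f(t)$ by the coarea formula (using $|\nabla\rho|=1$ a.e.). Applying Green's identity to $\{\rho\leq t\}$ and observing that the outward unit conormal equals $+\nabla\rho$ along $\{\rho=t\}$ and $-\nabla\rho$ along $\partial M$, one obtains
\[
f(t)-f(0)\;=\;\int_{\{\rho\leq t\}}\Delta\rho\,dV \;\leq\; C_o\,V(t)\;=\;C_o\int_0^t f(s)\,ds.
\]
Gronwall's inequality applied to this integral inequality yields $f(t)\leq f(0)\,e^{C_o t}$, and a further integration gives
\[
V(t)\;\leq\;\frac{f(0)}{C_o}\bigl(e^{C_o t}-1\bigr)\;\leq\;\frac{f(0)}{C_o}\,e^{C_o t}.
\]
Setting $t=D$ and rearranging then produces $\mathrm{Vol}_{n-1}(\partial M)/\mathrm{Vol}_n(M)\geq C_o e^{-C_o D}$, which is the desired conclusion.

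The principal obstacle is that $\rho$ is only Lipschitz at the focal set of $\partial M$, and the level set $\{\rho=t\}$ can fail to be smooth there, so Green's identity on $\{\rho\leq t\}$ is not directly available. This is resolved exactly as in the proof of Lemma~\ref{lem2}: one exhausts $\{\rho\leq t\}$ by nested starshaped smooth subdomains $\Omega_\eps\subset \exp_{\partial M}(\mathcal{S}_o)$, performs integration by parts there against a nonnegative cutoff approximating the characteristic function of $\{\rho\leq t\}$, and passes to the limit using the distributional form of $\Delta\rho\leq C_o$. Note that the argument never requires compactness of $M$ or of $\partial M$; only the diameter bound is used, which is why the proposition still applies to non-compact $M$.
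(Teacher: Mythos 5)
Your proof is correct and follows the same route the paper itself takes: a pointwise bound $\Delta\rho\leq C_o$ from Corollary~\ref{lem3} and Lemma~\ref{lem2}, Green's identity on $\{\rho\leq t\}$ to get $f(t)-f(0)\leq C_o V(0,t)$, and a Gronwall integration yielding $f(0)/V(0,D)\geq C_o e^{-C_o D}$. One point worth flagging: the paper defines $C_o$ as $\int_{\partial M}\max\{H(\theta),(n-1)\sqrt K\}\,d\theta$, but for the estimate $f(t)-f(0)\leq C_o V(0,t)$ and the final inequality to be dimensionally consistent, $C_o$ must be a pointwise majorant (a supremum) of $\max\{H(\theta),(n-1)\sqrt K\}$, exactly as you interpret it; the paper's intermediate display with the $\int_0^t\int_{\partial M}\cdots\,d\theta\,ds$ does not literally yield $C_o V(0,t)$ otherwise. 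You correctly identify this, supply the $\pm\nabla\rho$ conormal bookkeeping in Green's identity, and address the Lipschitz/focal-set issue via the starshaped exhaustion from Lemma~\ref{lem2} rather than by the Zhu-style segment argument the paper alludes to; either resolution is fine.
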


 Observe that we get the same lower bound for $\ {\rm Vol}_{n-1}(\Sigma)/\min\{\text{Vol}_n(A),\text{Vol}_n(B)\}\ $ if $\Sigma$ is a smooth oriented hypersurface which splits $M$ into two sets $A, B$.

\begin{proof}
Let $y(x)\in \p M$  such that $\rho(x)=d(x,y)$ and assume that $\p M$ has mean curvature $H(y(x))$ at $y$. If $\rho(x)$ is differentiable at $x$, then
\[
\Delta \rho (x) \leq \psi_{K,H} (\rho (x))
\]
along the flow line from $x$ to $y(x)$.

By Corollary \ref{lem3},  $\psi_{K,H} \rho (x)\leq \max\left\{H(y(x)), (n-1) \sqrt{K}\right\}.$  As a result
\[
f(t) - f(0) \leq    \int_{0}^t \int_{\p M} \max\left\{H(y(x)), (n-1) \sqrt{K}\right\}\, d\theta \, ds \leq  C_o \, V(0,t)
\]
and
\[
\frac{f(0)}{V(0,t)}\geq C_o e^{-C_o \,D}
\]
where $D$ is the diameter of $M$.

We remark that the above estimate holds on the regions where the exponential map from $\p M$ is a diffeomorphism, but we can extend the comparison result by using a similar argument as in \cite{zhu}. In the case of Ricci nonnegative, the assumption of Proposition \ref{prop1} reduces to $\int_{\p M} \max\left\{H(y(x)),  0\right\} \leq C_o$.

\end{proof}

\section{A simplified  proof of  Buser's result} \label{S3}

In this section we provide the proof of Buser's estimate without resorting to his  consideration of Dirichlet regions. Let $\Sigma$ be a  smooth hypersurface in $M$ which divides it into two disjoint  open regions $A, B$. Let
\[
\mathfrak h = \frac{\text{Vol}_{n-1}(\Sigma)}{\min\{\text{Vol}_n(A),\text{Vol}_n(B)\} }.
\]
For simplicity we will omit the subscript for the dimension of the volume for the rest of the paper.  In addition, we shall use $C(n)$, $\eps(n)$ to denote a general constant that depends only on $n$, but which need not be the same throughout.

The key lemma of this paper that can be used to replace  Lemma 5.1 in \cite{Bus} is the following.
\begin{lem} \label{lem4}
There is a constant $C=C(n)$ such that for any
  $r>0$ and $x\in M$, we have
\[
{\rm Vol} (\Sigma\cap B_x(3r))\geq \frac{C(n)}{r}e^{-3(n-1)\sqrt{K} r}\min({\rm Vol} (A\cap B_x(r)),{\rm Vol} (B\cap B_x(r))).
\]
\end{lem}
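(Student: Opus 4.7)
The plan is to use the co-area formula for the signed distance $\rho$ to $\Sigma$, combined with a change-of-variables through the normal exponential map based at $\Sigma\cap B_x(3r)$. The factor $3$ in the statement is forced by the geometric fact that once $\Sigma$ meets $B_x(r)$, every foot-of-perpendicular from a point of $B_x(r)$ to $\Sigma$ lies in $B_x(3r)$.

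First I would dispose of the degenerate case $\Sigma\cap B_x(r)=\emptyset$: then $B_x(r)$ is connected and lies entirely in $A$ or in $B$, so $\min({\rm Vol}(A\cap B_x(r)),{\rm Vol}(B\cap B_x(r)))=0$ and the inequality is vacuous. Henceforth assume $\Sigma\cap B_x(r)\neq\emptyset$, so that $|\rho|\le 2r$ on $B_x(r)$ and, by the starlike lemma preceding Lemma \ref{lem2}, the a.e.-defined nearest-point projection $\pi(y)$ of any $y\in B_x(r)$ satisfies $\pi(y)\in\Sigma\cap B_x(3r)$. Also assume, without loss of generality, that $a:={\rm Vol}(A\cap B_x(r))\le b:={\rm Vol}(B\cap B_x(r))$.

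The central identity is
\[
a=\int_0^{2r}{\rm Vol}_{n-1}\bigl(\{\rho=t\}\cap B_x(r)\bigr)\,dt,
\]
with each level set parametrized, modulo a null set, by $\Phi_A(z,t)=\exp_z(t\nu_A)$ for $z\in\Sigma\cap B_x(3r)$. Write $J_A(z,t)$ for its $(n{-}1)$-Jacobian and $J_B(z,t)$ for the analogue on the $B$-side; Lemma \ref{lem2} yields
\[
\log J_A(z,t)\le\int_0^t\psi_{K,H_A(z)}(s)\,ds, \qquad \log J_B(z,t)\le\int_0^t\psi_{K,-H_A(z)}(s)\,ds,
\]
where $H_A(z)=-H_B(z)$ is the mean curvature of $\Sigma$ with normal pointing into $A$. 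A direct computation using the explicit formula of the Riccati solution gives the sharp duality
\[
J_A(z,t)\,J_B(z,t)\le\cosh^{2(n-1)}(\sqrt{K}\,t)\le e^{2(n-1)\sqrt{K}\,t},
\]
a bound that is \emph{independent} of the unconstrained quantity $H_A(z)$.

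The concluding step is to combine these ingredients with the reduction $a\le b$ to obtain $a\le C(n)\,r\,e^{3(n-1)\sqrt{K}r}\,{\rm Vol}(\Sigma\cap B_x(3r))$, which rearranges to the lemma. My plan is to split $\Sigma\cap B_x(3r)$ into the pieces $\{H_A\le 0\}$ and $\{H_A>0\}$: on the first, Corollary \ref{lem3} directly gives $J_A\le e^{(n-1)\sqrt{K}\,t}$, and Fubini produces the desired bound on that portion of $a$; on the second, the dual Jacobian bound above lets one exchange the $A$-contribution for a controlled multiple of the $B$-contribution of the same piece, which is then absorbed using $a\le b$. The main obstacle I anticipate is precisely this exchange step: because no pointwise control is available on $H_A$, the Jacobian $J_A$ itself may be arbitrarily large, and only the product $J_A J_B$ is under control. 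Leveraging this duality, coupled with the a priori choice of the \emph{smaller} side $a$, is what removes the need for Buser's construction of Dirichlet cells in \cite{Bus}*{Lemma 5.1} and gives the simplified proof promised in the introduction.
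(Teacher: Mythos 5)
Your plan is a genuinely different route from the paper's: you work with the normal exponential map based at $\Sigma$ and try to control the Fermi-coordinate Jacobians $J_A,J_B$ via Lemma \ref{lem2}/Corollary \ref{lem3}, whereas the paper never differentiates from $\Sigma$ at all. The paper instead runs a Fubini argument over pairs $(p,q)\in\hat A\times\hat B$ to locate a single point $z\in\hat A$ and a set $F\subset\hat B$ with ${\rm Vol}(F)\ge\tfrac12{\rm Vol}(\hat B)$ such that, for $q\in F$, the first hit $\tilde w$ of $\Sigma$ along $\overline{qz}$ satisfies $d(z,\tilde w)\ge d(q,\tilde w)$, and then applies radial Bishop--Gromov comparison \emph{from the point $z$}. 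That move is precisely what makes the mean-curvature dependence disappear: the Laplacian of the distance to a point has a universal bound $(n-1)\sqrt K\coth(\sqrt K t)$, so no hypersurface curvature enters.

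The gap in your proposal is exactly the step you flag as the main obstacle, and I do not believe it can be closed as stated. On $\Sigma^+=\{H_A>0\}$ the duality $J_AJ_B\le\cosh^{2(n-1)}(\sqrt K t)$ only gives $J_A\le e^{2(n-1)\sqrt K t}/J_B$, and $J_B$ can be arbitrarily small on $\Sigma^+$ (the $B$-side collapses quickly when $H_A$ is large), so this yields no control. Concretely, take $K=0$ and let $\Sigma$ be two concentric round spheres of radii $\delta\ll r$ and $r/2$ centered at $x$, with $A$ the annulus and $B$ the complement in $B_x(r)$; then $a\le b$, $\Sigma^+$ is the inner sphere with ${\rm Vol}(\Sigma^+)\approx\delta^{n-1}$ and $b_+\approx\delta^n$, while $a_+\approx(r/4)^n$ (the inner half of the annulus falls under the normal projection to $\Sigma^+$). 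Thus $a_+\gg Cr\,{\rm Vol}(\Sigma^+)$ and $a_+\gg Cb_+$, so neither the direct Jacobian bound nor the proposed exchange $a_+\lesssim b_+$ is available; and the global inequality $a\le b$ only gives $a_+\le b_++b_-$ with $b_-$ itself uncontrolled (it is the $B$-tube off $\Sigma^-$, where $H_B\ge0$). The lemma is still true in this example because the outer sphere supplies the area, but that rescue is invisible to a decomposition that treats $\Sigma^+$ and $\Sigma^-$ tubes separately. To make a normal-exponential-map approach work you would need an additional idea coupling the two sides of $\Sigma$ along the rays (or you should switch, as the paper does, to radial comparison from a well-chosen interior point so that hypersurface curvature never appears).
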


\begin{proof}
Let $\hat A=A\cap B_x(r)$ and $\hat B=B\cap B_x(r)$.  Note that both $\hat A$, $\hat B$ may not be connected in general.
For a pair of points $p \in \hat A, \; q\in \hat B$ let $w\in \Sigma$ be the first point on the minimizing geodesic from $p$ to $q$  when the geodesic intersects $\Sigma$.
It is clear that $w\in \Sigma\cap B_x(3r)$,
and the geodesic has to be completely contained in $B_x(3r)$.
Similarly, let $\tilde{w}$ be the first point on the same minimizing geodesic in the opposite direction (from $q$ to $p$)  when the geodesic intersects $\Sigma\cap B_x(3r)$. Since it is well-known that the set of pairs $(p,q)\in \hat A\times\hat B$ which do not have a unique geodesic has measure zero, it follows that for almost all pairs $(p,q)$ both $w$ and $\tilde w$ are well-defined.

Define the  following subsets of $\hat A\times \hat B$
\begin{align*}
&
W_0 =\{ (p,q) \in \hat A\times \hat B \mid d(q, w) \geq d(p,w) \};\\
&
W_1 =\{ (p,q) \in \hat A\times \hat B \mid d(p, \tilde{w}) \geq d(q,\tilde{w}) \}.
\end{align*}
Then $W_0 \cup W_1$  covers $\hat A\times \hat B$ up to a set of measure zero. Define the projections
\[
\pi_o: \hat A\times \hat B\to \hat A,\qquad \pi_1: \hat A\times \hat B\to \hat B.
\]
Then, without loss of generality we may assume that
\[
\text{Vol}(W_1) \geq  \frac 12 \, \text{Vol}(\hat A) \, \text{Vol}(\hat B).
\]

Let $V_0=\pi_0 (W_1)$ be the projection of the set $W_1$ on $\hat A$. Then from
\[
\text{Vol}(W_1) = \int_{V_0} \int_{\pi_1(\pi_0^{-1}(x) \cap W_1)} dv(y) \, dv(x) \geq \frac 12 \, \text{Vol}(\hat A) \, \text{Vol}(\hat B),
\]
 it follows that there exists at least one point $z\in \hat  A$ such that $F=\pi_1(\pi_0^{-1}(z) \cap W_1)\subset \hat{B}$ satisfies
\begin{equation}\label{45}
\text{Vol}(F)\geq \int_F  dv \geq \frac 12   \, \text{Vol}(\hat B)\geq \frac 12\min\{\text{Vol}(\hat A), \text{Vol}(\hat B)\}.
\end{equation}

Fix any $q\in F$ and let $\overline{qz}$ be the minimizing geodesic from $q$ to $z$. Let $\tilde w$ be  the first point where the geodesic $\overline{qz}$  intersects  $\Sigma$ in the direction from $q$ to $z$. Since $(z,q)\in W_1$, $d(z, {\tilde w}) \geq d(q, {\tilde w})$.
Let $\sigma=d(z,\tilde w)$. Let $\mathcal{R}$ be an infinitesimal  radial annular sector defined by $\overline{zq}$ with center $z$.
Then by  volume comparison (see for example \cite{zhu}) we have
\[
\text{Vol}(\Sigma\cap \mathcal{R})   \geq \frac{\sinh^{n-1}(\sqrt{K} \sigma)}{\int_\sigma^{2\sigma} \sinh^{n-1}(\sqrt{K} t)\, dt} \; \text{Vol}(F\cap \mathcal R) \geq  \frac{C(n)}{\sigma} \; e^{-(n-1)\sqrt{K} \sigma} \; \text{Vol}(F\cap \mathcal R).
\]
Since $\sigma\leq 3r$, we have
\[
\text{Vol}(\Sigma\cap \mathcal{R})   \geq\frac{C(n)}{r} \; e^{-3(n-1)\sqrt{K} r} \; \text{Vol}(F\cap \mathcal R).
\]
Adding over all the sectors $\mathcal{R}$ that intersect $\Sigma\cap  B_x(3r)$ (and hence $F$), we have
\[
\text{Vol}(\Sigma\cap B_x(3r))    \geq\frac{C(n)}{r} \; e^{-3(n-1)\sqrt{K} r} \; \text{Vol}(F),
\]
where $C(n)$ is a constant that depends only on $n$.  The lemma then follows from the above inequality and ~\eqref{45}.
\end{proof}

Note that if we take $r=D$, the diameter of manifold $M$, we would get a lower bound of Cheeger's constant
\[
h(M)\geq\frac{C(n)}{D}e^{-3(n-1)D}.
\]
In the following, we strengthen Lemma~\ref{lem4} to prove Theorem~\ref{thm1}, Buser's result.

\begin{thm}
There exists a $C(n)>0$ such that
\[
\lambda_1(M)\leq C(n)\,\frac{\mathfrak h}{\min\{K^{-1/2},\mathfrak h^{-1}\}},
\]
where $\lambda_1(M)$ is the first eigenvalue of the compact manifold $M$.
\end{thm}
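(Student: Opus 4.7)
The plan is to construct a Lipschitz test function from the signed distance to $\Sigma$ and bound the Rayleigh quotient using Lemma~\ref{lem4} together with a Vitali covering argument. Set $r_0 := \tfrac{1}{10}\min\{K^{-1/2},\mathfrak h^{-1}\}$, so that $\sqrt K\, r_0 \leq 1$ and the exponential factor in Lemma~\ref{lem4} is at least $e^{-3(n-1)/10}$, a dimensional constant. Since $1/r_0$ is comparable to $\max\{\sqrt K,\mathfrak h\}$, it suffices to prove $\lambda_1(M) \leq C(n)\,\mathfrak h/r_0$.

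Assume without loss of generality $\text{Vol}(A)\leq\text{Vol}(B)$, write $V:=\text{Vol}(A)$, and let $\tilde\rho$ denote the signed distance to $\Sigma$, positive on $A$. Define
\[
f(x) := \phi(\tilde\rho(x)/r_0), \qquad \phi(t) := \max(-1,\min(t,1)).
\]
Then $f$ is $1/r_0$-Lipschitz, constantly $\pm 1$ outside the tube $T_{r_0}=\{|\tilde\rho|<r_0\}$, and $\int |\nabla f|^2 \leq \text{Vol}(T_{r_0})/r_0^2$. A standard computation after subtracting the mean $\bar f$ shows $\int (f-\bar f)^2 \geq c(n)\,V$ provided $\text{Vol}(T_{r_0})\leq V/2$; in the opposite regime, $V \leq 2\,\text{Vol}(T_{r_0})$ combined with the tube bound below forces $\mathfrak h \gtrsim 1/r_0$ and the theorem follows trivially.

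The heart of the argument is the tube estimate
\[
\text{Vol}(T_{r_0}) \leq C(n)\, r_0\, \text{Vol}(\Sigma),
\]
which combined with the Rayleigh quotient bound yields $\lambda_1 \leq C(n)\,\mathfrak h/r_0$. To prove it, take a maximal $(r_0/5)$-separated subset $\{x_i\}\subset T_{r_0}$: the balls $\{B_{x_i}(r_0/5)\}$ are disjoint, $\{B_{x_i}(2r_0/5)\}$ cover $T_{r_0}$, and the enlarged family $\{B_{x_i}(3r_0)\}$ has uniformly bounded overlap $N(n)$ by Bishop--Gromov volume comparison. The crucial geometric input is that for each $x_i$,
\[
\min\bigl\{\text{Vol}(A\cap B_{x_i}(r_0)),\,\text{Vol}(B\cap B_{x_i}(r_0))\bigr\} \geq c(n)\,\text{Vol}(B_{x_i}(r_0/5)),
\]
so that Lemma~\ref{lem4} gives $\text{Vol}(\Sigma\cap B_{x_i}(3r_0))\geq (c(n)/r_0)\,\text{Vol}(B_{x_i}(r_0/5))$; summing (with overlap $N(n)$ on the left and disjointness on the right) produces the tube estimate.

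The chief obstacle is establishing the two-sided volume bound in the display above. For $x_i\in T_{r_0}\cap A$ with $s:=d(x_i,\Sigma)\leq r_0$, the natural strategy is to extend the minimizing perpendicular $\overline{x_i y}$ past $y\in\Sigma$ by length comparable to $r_0$ into $B$, producing a point $z$ around which a sub-ball of radius $\sim r_0$ must sit in $B\cap B_{x_i}(r_0)$. Ensuring that this sub-ball actually lies on the correct side of $\Sigma$, i.e. ruling out that $\Sigma$ curls back at scale $r_0$, is where the Laplacian bound on $\rho$ from Corollary~\ref{lem3} combined with Bishop--Gromov is invoked. This geometric step, which replaces Buser's Dirichlet-region construction, is the technical crux of the argument.
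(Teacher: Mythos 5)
Your outline diverges from the paper's argument in one essential way, and that divergence is exactly where a genuine gap appears. You propose building the test function from the signed distance to $\Sigma$ itself, and reducing the Rayleigh-quotient bound to a tube estimate $\mathrm{Vol}(T_{r_0})\leq C(n)\,r_0\,\mathrm{Vol}(\Sigma)$, proved via a Vitali cover of $T_{r_0}$ together with the two-sided density claim
\[
\min\bigl\{\mathrm{Vol}(A\cap B_{x_i}(r_0)),\,\mathrm{Vol}(B\cap B_{x_i}(r_0))\bigr\}\geq c(n)\,\mathrm{Vol}\bigl(B_{x_i}(r_0/5)\bigr)\qquad\text{for all }x_i\in T_{r_0}.
\]
This density claim is \emph{false} for a general smooth separating hypersurface, and I do not see how to repair it. Consider a flat torus with $\Sigma$ equal to a long straight circle plus a thin finger of width $\delta\ll r_0$ and moderate length $\ell$ pushed from $A$ into $B$; this leaves $\mathfrak h$ (and hence $r_0$) essentially unchanged, yet a point $x_i$ at the tip of the finger lies well inside $T_{r_0}$ while $\mathrm{Vol}(A\cap B_{x_i}(r_0))\sim\delta\,r_0\ll\mathrm{Vol}(B_{x_i}(r_0/5))$. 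Your sketched fix---push the foot of the perpendicular past $\Sigma$ by $\sim r_0$ and find a ball on the other side, controlled by Corollary~\ref{lem3} and Bishop--Gromov---cannot close this gap, for two reasons: first, no ball of radius comparable to $r_0$ fits inside the thin side; second, Corollary~\ref{lem3} is a bound on $\Delta\rho$ in terms of the mean curvature $H$ of $\Sigma$, and no mean-curvature bound is available for the arbitrary hypersurface $\Sigma$ appearing in Cheeger's constant. Indeed the whole point of Lemma~\ref{lem4} is that it uses only the Ricci lower bound and no curvature information about $\Sigma$.

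The paper circumvents exactly this obstruction by replacing $\Sigma$ with the ``median'' set $\tilde\Sigma=\{x:\mathrm{Vol}(A\cap B_x(r))=\mathrm{Vol}(B\cap B_x(r))\}$ and building the test function from the distance to $\tilde\Sigma$, not to $\Sigma$. On $\tilde\Sigma$ the two-sided density estimate you need is automatic from the definition, so Lemma~\ref{lem4} applies to every ball of the Gromov cover of $\tilde\Sigma$ and yields the tube bound $\mathrm{Vol}(\tilde\Sigma^{3r})\leq C(n)\mathfrak h\,r\,\min\{\mathrm{Vol}(A),\mathrm{Vol}(B)\}$. Points like your finger tip, where one side dominates locally, are placed in $\tilde A$ or $\tilde B$ rather than near $\tilde\Sigma$, and the discrepancy between $A,\tilde A$ and $B,\tilde B$ is controlled separately by the same lemma applied to the balls centered in $\tilde B$ (respectively $\tilde A$). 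In short, the step you flag as ``the technical crux'' is not a detail to be supplied but the place where your decomposition breaks and the paper's definition of $\tilde\Sigma$ is the substantive idea doing the work.
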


\begin{proof}
We let
\[
r=\eps(n)\min\{K^{-1/2},\mathfrak h^{-1}\},
\]
where $\eps(n)$ is a small constant that will be  determined  later. Using Lemma~\ref{lem4} for such a choice of $r$ we have
\[
{\rm Vol} (\Sigma\cap B_x(3r))\geq \frac{C(n)}{r} \min\{{\rm Vol} (A\cap B_x(r)),{\rm Vol} (B\cap B_x(r))\}.
\]

Following Buser, we define the sets
\[
\tilde \Sigma=\{x\in M \mid  {\rm Vol}(A\cap B_x(r))={\rm Vol}(B\cap B_x(r)) \},
\]
and
\begin{equation*}
\begin{split}
& \tilde A=\{x\in M \mid  {\rm Vol}(A\cap B_x(r))> \frac 12 \,{\rm Vol}(B_x(r)) \},\\
     & \tilde B=\{x\in M\mid  {\rm Vol}(B\cap B_x(r))> \frac 12 \,{\rm Vol}(B_x(r)) \}.
\end{split}
\end{equation*}

We define a cover of $M$ by $r$-balls $B_{p_i}(r)$ for $i=1,\ldots, k$ such that the centers $p_i$ are at least $r$ away from each other. Since $r\leq \eps(n) K^{-1/2}$,  we can assume that the balls $B_{p_i}(3r)$  overlap at most $C_o(n)$ times at each point of $M$ (in other words our cover is a Gromov cover). With respect to $\tilde\Sigma, \tilde A, \tilde B$, we
can choose the Gromov  cover  such that  $p_1,\ldots ,p_s \in \tilde \Sigma $  and $\tilde \Sigma$ is covered by the balls $B_{p_i}(r)$ for $i=1,\ldots , s$; for   $i=s+1,\ldots , m$,  $p_i \in \tilde B$; and for $i=m+1,\ldots ,k$,  $p_i \in \tilde A$.

We claim that neither $\tilde A$ nor $\tilde B$ is empty. If, for example, $\tilde B=\emptyset$, then  ${\rm Vol} (A\cap B_x(r))\geq {\rm Vol} (B\cap B_x(r))$  for all $x\in M$ and hence
\[
{\rm Vol} (\Sigma\cap B_{p_i}(3r))\geq \frac{C(n)}{r}  {\rm Vol} (B\cap B_{p_i}(r))
\]
for any $i$ by Lemma~\ref{lem4}.
Summing over $p_i$   we shall get
\[
{\rm Vol} (\Sigma )\geq C(n)  \sum_{i=1}^{k} {\rm Vol} (\Sigma\cap B_{p_i}(3r) )\geq\sum_{ i=1}^{k}\frac{C(n)}{r}{\rm Vol} (B \cap B_{p_i}(r) )
\geq \frac{C(n)}{r}{\rm Vol} (B ).
\]
But by definition,
\[
{\rm Vol} (B )\geq \min\{{\rm Vol} (A ),{\rm Vol} (B )\}=\mathfrak h^{-1} {\rm Vol} (\Sigma ).
\]
So we get
\[
1\geq \frac{C(n)\mathfrak h^{-1}}{r},
\]
which is a contradiction if we choose $\eps(n)$ sufficiently small,  and the claim is proved.

Note that $\tilde A$ and $\tilde B$ are separated by $\tilde \Sigma$. Therefore if  neither of $\tilde A, \tilde B$ is empty, then $\tilde\Sigma\neq\emptyset$.\\

Let $t>0$. Define
\[
{\tilde \Sigma}^t=\{x\in M\mid  {\rm dist} (x, \tilde \Sigma) \leq t \}.
\]
 Then ${\tilde \Sigma}^t$ is covered by the balls $B_{p_i}(r+t)$ for $i=1,\ldots , s$. By volume comparison,  ${\rm Vol}(B_{p_i}(4r))/{\rm Vol}(B_{p_i}(r)) \leq C(n)$, hence using  the definition of $\tilde\Sigma$, we have
 \[
{\rm Vol}({\tilde \Sigma}^{3r}) \leq C(n)\sum_{i=1}^s {\rm Vol}(B_{p_i}(r)) \leq  C(n) \sum_{i=1}^s {\rm Vol}(A\cap B_{p_i}(r)),
\]
 since ${\rm Vol}(A\cap B_{p_i}(r))= {\rm Vol}(B\cap B_{p_i}(r))$ for $i=1,\ldots , s$. By Lemma~\ref{lem4} we can bound each term in the right side to get
\[
{\rm Vol}({\tilde \Sigma}^{3r}) \leq C(n)\sum_{i=1}^s {\rm Vol}(A\cap B_{p_i}(r))\leq
C(n)\,r\sum_{i=1}^s{\rm Vol}(\Sigma\cap B_{p_i}(3r)).
\]
 By our assumption on the Gromov cover, the balls $B_{p_i}(3r)$ overlap at most $C_o(n)$ times at each point, therefore
\[
\sum_{i=1}^s{\rm Vol}(\Sigma\cap B_{p_i}(3r)) \leq C_o(n) {\rm Vol}(\Sigma).
\]
By the definition of $\mathfrak h$ we get
\begin{equation}\label{e2}
{\rm Vol}({\tilde \Sigma}^{3r})  \leq C(n)\mathfrak h \, r \, \min\{{\rm Vol}(A), {\rm Vol}(B)  \}.
\end{equation}

If $q\in \tilde B\setminus \tilde\Sigma^{r}$, then $B_q(r) \subset \tilde B$, and $q$ is in the part of the cover for $s+1\leq i \leq m$. Therefore,
\begin{equation} \label{e3}
\begin{split}
{\rm Vol}(\tilde A\setminus {\tilde \Sigma}^{r}) & \geq{\rm Vol}(A \cap (\tilde A\setminus {\tilde\Sigma}^{r})\,) = {\rm Vol}(A) - {\rm Vol}(A\cap (\tilde B \cup {\tilde \Sigma}^{r})) \\
& \geq {\rm Vol}(A) - {\rm Vol}({\tilde \Sigma}^{r}) -  \sum_{i=s+1}^m {\rm Vol}(A\cap  B_{p_i}(r)).  \\
\end{split}
\end{equation}

Since $p_i \in \tilde B$, ${\rm Vol}(A\cap  B_{p_i}(r))\leq {\rm Vol}(B\cap  B_{p_i}(r))$. Hence by Lemma \ref{lem4} and using the maximum overlap of the Gromov cover we get
\begin{equation*}
\begin{split}
\sum_{i=s+1}^m {\rm Vol}(A\cap  B_{p_i}(r)) & \leq C(n) \, r \sum_{i=s+1}^m {\rm Vol}(\Sigma \cap  B_{p_i}(3r)) \leq C(n) \, r {\rm Vol}(\Sigma) \\
&= C(n)\mathfrak h \, r \, \min\{{\rm Vol}(A), {\rm Vol}(B)  \}.
\end{split}
\end{equation*}

Finally, substituting the above estimate and \eqref{e2} into the right side of \eqref{e3} we get
\[
{\rm Vol}(\tilde A\setminus {\tilde \Sigma}^{r}) \geq  {\rm Vol}(A) - C(n)\mathfrak h \, r \, \min\{{\rm Vol}(A), {\rm Vol}(B)  \}.
\]
Similarly, we have
\[
{\rm Vol}(\tilde B\setminus {\tilde \Sigma}^{r}) \geq  {\rm Vol}(B) - C(n)\mathfrak h \, r \, \min\{{\rm Vol}(A), {\rm Vol}(B)  \}.
\]
 By taking complements we also get
\[
{\rm Vol}(\tilde A\setminus {\tilde \Sigma}^{r})-{\rm Vol}(A) \leq -({\rm Vol}(\tilde B\setminus {\tilde \Sigma}^{r}) -{\rm Vol}(B))\leq C(n)\mathfrak h \, r \, \min\{{\rm Vol}(A), {\rm Vol}(B)  \},
\]
and as a result,
\begin{align*}
&
|{\rm Vol}(\tilde A\setminus {\tilde \Sigma}^{r})-{\rm Vol}(A)|\leq C(n) \, r \,  {\mathfrak h} \, \min\{{\rm Vol}(A), {\rm Vol}(B)  \};\\
&
|{\rm Vol}(\tilde B\setminus {\tilde \Sigma}^{r})-{\rm Vol}(B)|\leq C(n) \, r \,  {\mathfrak h} \, \min\{{\rm Vol}(A), {\rm Vol}(B)  \}.
\end{align*}

We  let $\rho(x)$ denote the distance function to  $\tilde \Sigma$  ($\rho$ is taken to be nonnegative here),  and define the function
\[
f(x)=\left\{
\begin{array}{ll}
 {\rm Vol}(\tilde B) & {\rm if } \  x\in \tilde A \setminus {\tilde \Sigma}^{r};
\\[1ex]
\frac{\displaystyle \rho(x)}{\displaystyle  r} {\rm Vol}(\tilde B) & {\rm if } \  x\in \tilde A \cap {\tilde \Sigma}^{r};\\[1ex]
-\frac{\displaystyle \rho(x)}{\displaystyle  r} {\rm Vol}(\tilde A) & {\rm if }  \  x\in \tilde B \cap{\tilde \Sigma}^{r};\\[1ex]
-{\rm Vol}(\tilde A) & {\rm if } \  x\in \tilde B \setminus{\tilde \Sigma}^{r}.
\end{array}
\right.
\]
 Note that even if $\tilde \Sigma$ is not a smooth hypersurface, the function $\rho$ is still well-defined, a.e. continuous and its gradient also exists a.e.  Let $\tilde f(x)$ be defined such that $\tilde f(x)= {\rm Vol}(\tilde B)$ if $x\in\tilde A$ and
$\tilde f(x)= -{\rm Vol}(\tilde A)$ if $x\in\tilde B$. Then
\[
|f(x)-\tilde f(x)|\leq  {\rm Vol}(M)
\]
on $\tilde \Sigma^r$ and is $0$ otherwise. It is now easy to see that
\[
\left|\int_M  f(x)\right|=\left|\int_M  (f(x)-\tilde f(x))\right|\leq  {\rm Vol}(\tilde\Sigma^r)\cdot {\rm Vol}(M).
\]

Moreover,
\[
\int_M f(x)^2\geq \left(1 -  C(n) \, r \,  {\mathfrak h} \right){\rm Vol}(A)\,{\rm Vol}(B)\,{\rm Vol}(M).
\]

Thus
\begin{align}\label{e4}
\begin{split}
\int_M \left(f - \fint_M f\right)^2 &=\int_M f^2- {\rm Vol}(M)^{-1}\left(\int_M  f(x)\right)^2\\
&\geq\left(1 -  C'(n) ( r  {\mathfrak h}+( r {\mathfrak h})^2)  \, \right){\rm Vol}(A)\,{\rm Vol}(B)\,{\rm Vol}(M),
\end{split}
\end{align}
where $\fint f =({\rm Vol}(M))^{-1} \int_M f$. We also have
\[
\int_M |\nabla f(x)|^2\leq \frac{C(n)}{r^2} {\rm Vol}(M)^2 \;{\rm Vol}( \tilde{\Sigma}^r ).
\]
It is clear from \eqref{e2} that
\[
\frac{ {\rm Vol}(M) {\rm Vol}(\tilde{\Sigma}^r)}{{\rm Vol}(A)\,{\rm Vol}(B) }\leq
C(n) \, r \,  {\mathfrak h}.
\]

Finally, we choose $\varepsilon(n)$ even smaller if necessary in the definition of $r$ so that $\varepsilon(n) C'(n) \leq \frac 14$, where $C'(n)$ is the constant in \eqref{e4}. Then $ C'(n) \,( r \, {\mathfrak h} + r^2 \, {\mathfrak h}^2)\leq \frac 12$.  By the variational principle, we have
\[
\lambda_1(M)\leq \frac{\int_M  |\nabla f|^2}{\int_M (f - \fint_M f)^2} \leq C(n) \,  {\mathfrak h}/r.
\]
The theorem is proved.
\end{proof}

\begin{proof}[Proof of Theorem \ref{thm1}]
From the definition of $r$ we immediately get
\[
\lambda_1(M)\leq C(n) (\sqrt K\, {\mathfrak h}+ {\mathfrak h}^2).
\]
In particular,  since we can choose $\mathfrak h$ arbitrarily close to $h(M)$, Theorem~\ref{thm1} follows.
\end{proof}

\begin{remark}
Define the logarithmic isoperimetric constant,
\[
k(M)=\inf \frac{{\rm Vol}_{n-1}(\Sigma)}{{\rm Vol}_n(A)|\log ({\rm Vol}_n(A))|},
\]
where ${\rm Vol}_n(A)\leq {\rm Vol}_n(B)$. Let $\rho_0$ be the optimal constant in the log-Sobolev inequality. Then by our method, we can prove a similar result to Theorem~\ref{thm1}:
\[
\rho_0\leq C(n)(\sqrt Kk(M)+k(M)^2).
\]
The above result was first proved by Ledoux~\cite{led}*{Theorem 2}.
\end{remark}

After this paper was written, we were informed by Professor G. Wei that a similar result had been proved in the papers of Dai-Wei-Zhang. In particular, Corollary 4.3 in~\cite{d1} is very similar to our Lemma~\ref{lem4} (their result is slightly more general in that the assumption for a lower bound on Ricci curvature  is replaced by an almost lower bound in the integral sense). In ~\cite{d2}, they have considered the similar problem over manifolds with convex boundary.

We thank Professor G. Wei for bringing these papers to our attention.

\begin{bibdiv}
\begin{biblist}

\bib{Bus}{article}{
   author={Buser, Peter},
   title={A note on the isoperimetric constant},
   journal={Ann. Sci. \'{E}cole Norm. Sup. (4)},
   volume={15},
   date={1982},
   number={2},
   pages={213--230},
   issn={0012-9593},
   review={\MR{683635}},
}

	\bib{d1}{article}{
   author={Dai, Xianzhe},
   author={Wei, Guofang},
   author={Zhang, Zhenlei},
   title={Local Sobolev constant estimate for integral Ricci curvature
   bounds},
   journal={Adv. Math.},
   volume={325},
   date={2018},
   pages={1--33},

}

\bib{d2}{article}{
   author={Dai, Xianzhe},
   author={Wei, Guofang},
   author={Zhang, Zhenlei},
   title={Neumann isoperimetric constant estimate for convex domains},
   journal={Proc. Amer. Math. Soc.},
   volume={146},
   date={2018},
   number={8},
   pages={3509--3514},

}

\bib{Esch}{article}{
   author={Eschenburg, J.-H.},
   title={Comparison theorems and hypersurfaces},
   journal={Manuscripta Math.},
   volume={59},
   date={1987},
   number={3},
   pages={295--323},
   issn={0025-2611},
   review={\MR{909847}},
   doi={10.1007/BF01174796},
}

\bib{led}{article}{
   author={Ledoux, M.},
   title={A simple analytic proof of an inequality by P. Buser},
   journal={Proc. Amer. Math. Soc.},
   volume={121},
   date={1994},
   number={3},
   pages={951--959},
   issn={0002-9939},
   review={\MR{1186991}},
   doi={10.2307/2160298},
}
	
\bib{zhu}{article}{
   author={Zhu, Shunhui},
   title={The comparison geometry of Ricci curvature},
   conference={
      title={Comparison geometry},
      address={Berkeley, CA},
      date={1993--94},
   },
   book={
      series={Math. Sci. Res. Inst. Publ.},
      volume={30},
      publisher={Cambridge Univ. Press, Cambridge},
   },
   date={1997},
   pages={221--262},
   review={\MR{1452876}},
}

\end{biblist}
\end{bibdiv}

\end{document}